\newcounter{margnotes}
\def\sideremark#1{\ifvmode\leavevmode\fi\vadjust{\vbox to0pt{\vss 
			\hbox to 0pt{\hskip\hsize\hskip1em           
				\vbox{\hsize3cm\tiny\raggedright\pretolerance10000
					\noindent #1\hfill}\hss}\vbox to8pt{\vfil}\vss}}}%
\newcounter{lemenumi}
\newcommand{\labelemenumi}{(\alph{lemenumi})}
\newtheorem{theorem}{Theorem}[section]
\newtheorem{lemma}[theorem]{Lemma}
\newtheorem{conjecture}[theorem]{Conjecture}
\newtheorem{proposition}[theorem]{Proposition}
\theoremstyle{definition}
\newtheorem{definition}[theorem]{Definition}
\newtheorem{approach}[theorem]{General approach}
\theoremstyle{remark}
\newtheorem{remark}[theorem]{Remark}
\numberwithin{equation}{section}
\newcommand{\OO}{\mathcal{O}}
\newcommand{\Ocom}{\pi_1^{\OO-ab}}
\DeclareMathOperator{\Diff}{Diff}
\begin{document}
	\newcommand{\xc}{\xi}
	\newcommand{\yc}{\eta}
	\newcommand{\ep}{\varepsilon}
	\newcommand{\jb}{II}
	\newcommand{\df}{d}
	\newcommand{\al}{\alpha}
	
	\newcommand{\testrat}{R}
	\newcommand{\testpol}{P}
	
	\newcommand{\pn}{Q}
	\newcommand{\qn}{Q_1}
	\newcommand{\mon}{\mathcal{M}on}
	\newcommand{\ve}{\varepsilon}
	\newcommand{\var}{\mathcal{V}ar}
	\newcommand{\org}{T}
	\newcommand{\orgg}{t}
	\newcommand{\supp}{\text{supp}}
	\newcommand{\R}{\mathbb{R}}
	\newcommand{\C}{\mathbb{C}}
	\newcommand{\I}{\mathbb{I}}
	\newcommand{\N}{\mathbb{N}}
	\newcommand{\Z}{\mathbb{Z}}
	\newcommand{\Q}{\mathbb{Q}}
	\newcommand{\Xbar}{\bold{X}}

	
	\newcommand{\edg}{\gamma^0}
	\newcommand{\vtx}{p}
	\newcommand{\CH}{{\mathbb{C}} H_1({\mathcal{O}})}
	\newcommand{\CHy}{{\mathbb{C}} H_1({\mathcal{O}_y})}
	
	\title[Nilpotence and length of Melnikov functions]{Nilpotence of Orbits under Monodromy\\
	and\\
	the Length of Melnikov Functions}

	\author[P. Marde\v si\'c]{Pavao Marde\v si\'c}
\address[a]{Universit\'e de Bourgogne, Institute de 
	Math\'ematiques de Bourgogne - UMR 5584 CNRS\\
	Universit\'e de Bourgogne Franche Comt\'e
	9 avenue Alain Savary,
	BP 47870, 21078 Dijon, \\FRANCE}
\address[a]{ University of Zagreb, Faculty of Science, Department of Mathematics, Bijeni\v cka 30, 10 000 Zagreb, Croatia}
\email{mardesic@u-bourgogne.fr}

\author[D. Novikov]{Dmitry Novikov}
\address[b]{Faculty of Mathematics and 
	Computer Science, Weizmann Institute of Science, Rehovot, 7610001\\Israel}
	\email{dmitry.novikov@weizmann.ac.il}

\author[L. Ortiz-Bobadilla]{Laura Ortiz-Bobadilla}
\address[c]{Instituto de Matem\'aticas, Universidad Nacional Aut\'onoma de 
M\'exico 	(UNAM), 	\'Area de la Investigaci\'on Cient\'ifica, Circuito 
exterior, Ciudad 	Universitaria, 04510, Ciudad de M\'exico, M\'exico}
\email{laura@matem.unam.mx}

\author[J. Pontigo-Herrera]{Jessie Pontigo-Herrera}
\address[d]{Instituto de Matem\'aticas, Universidad Nacional Aut\'onoma de 
	M\'exico 	(UNAM), 	\'Area de la Investigaci\'on Cient\'ifica, Circuito 
	exterior, Ciudad 	Universitaria, 04510, Ciudad de M\'exico, M\'exico}
\email{pontigo@matem.unam.mx}
	
	\subjclass{34C07; 34C05, 34C08, 14D05 }
	
\thanks{This work was supported by Israel Science Foundation grant 1167/17, 
 Papiit (Dgapa UNAM) IN110520, Croatian Science Foundation (HRZZ) grants no. 2285, UIP 2017-05-1020, PZS-2019-02-3055 from Research Cooperability funded by the European Social Fund}	
	
	\begin{abstract}
Let $F\in\mathbb{C}[x,y]$ be a polynomial, $\gamma(z)\in \pi_1(F^{-1}(z))$ a non-trivial cycle in a generic fiber of $F$ and let $\omega$ be a polynomial $1$-form, thus defining a polynomial deformation  $dF+\epsilon\omega=0$ of the integrable foliation given by $F$. 

We study different invariants: the \emph{orbit depth} $k$, the \emph{nilpotence class} $n$, the \emph{derivative length} $d$ associated with the couple $(F,\gamma)$. These invariants bound the \emph{length $\ell$ of the first nonzero Melnikov function} of the deformation $dF+\epsilon\omega$ along $\gamma$.

We study in detail a simple example of a polynomial $F$ given as product of four lines. We show how these invariants vary depending on the relative position of the four lines and relate it also to the length of the corresponding Godbillon-Vey sequence. 
We formulate a conjecture motivated by the study of this example. 
\end{abstract}

\keywords{Iterated integrals, Melnikov function, displacement function, abelian integrals, limit cycles, nilpotence class}

\maketitle

	\section{Introduction, Main Results and Conjectures}
This work is motivated by the  \emph{16-th Hilbert's problem} or rather its \emph{infinitesimal version}. As it is known, the second part of 
Hilbert's 16-th  problem asks for an upper bound in function of the degree for the number of real \emph{limit cycles}, i.e. isolated periodic orbits of polynomial vector fields in the plane. The problem is far from been solved and the existence of such a number is open even for quadratic vector fields. 

Arnold formulated the \emph{infinitesimal Hilbert's problem}, which asks for a bound on the number of (real)  limit cycles that can arise under polynomial deformations from an integrable polynomial differential equation in the plane. This bound must be uniform in the sense that it must depend exclusively on the degree of the integrable one-form defining the corresponding foliation, and the degree of the polynomial that realizes the perturbation.

In one of its forms Arnold's infinitesimal Hilbert problem \cite{A} studies the following situation.

Let  $F(x,y)\in\mathbb{R}[x,y]$ be a square-free polynomial, $z$ a regular value of $F$  and $\gamma(z)\subset F^{-1}(z)$ a continuous family of real cycles of $F^{-1}(z)$. We will consider the complexification of the polynomial $F$ which, for the sake of simplicity, we will denote again by $F$.
The polynomial $F$ defines a singular fibration and hence a singular foliation given by the integrable one-form
\begin{equation}\label{init}
dF=0\,.
\end{equation}

Consider the polynomial deformations depending on the parameter $\epsilon$
\begin{equation}\label{def}
dF+\epsilon\omega=0\,.
\end{equation}

As usual, one defines the displacement function (holonomy map minus identity) $\Delta_\epsilon$ of the deformation \eqref{def}	 along $\gamma(z)$.
The isolated zeros of the displacement map $\Delta_\epsilon$ are in correspondence with the limit cycles of \eqref{def}. 

The Taylor series expansion with respect to $\epsilon$ at $0$ of  $\Delta_\epsilon$ is given by 
\begin{equation}\label{Delta}
\Delta_\epsilon(z)=\sum_{i=\mu}\epsilon^{i}M_i(z).
\end{equation}
The functions $M_i$ are called \emph{Melnikov functions}. 
We assume $M_\mu\not\equiv0$ and call it the \emph{first non-zero Melnikov function}. 
By the implicit function theorem, for regular values of $z$, it carries the main information on the number of limit cycles that can arise under the deformation. 
This motivates the study of this function $M_\mu$, the subject of the infinitesimal Hilbert 16th problem.

It is known that $M_\mu$ is an iterated integral \cite{G}.  Let $\ell$ denote  its \emph{iterated integral length}, 
or shorter, simply \emph{length}. It measures the complexity of $M_\mu$.
 Bounding this length, would be a key step towards the bound of the limit cycles arising under perturbation. In \cite{GI}, Gavrilov and Iliev gave a condition which guarantees that $M_\mu$ is an abelian integral (iterated integral of length $1$). We showed the necessity of this condition in \cite{MNOP1}.

\medskip
Let $z$ be a generic value of $F$. Consider the fundamental group $\pi_1$ of $F^{-1}(z)$, and let $L_j$ be 
its lower central sequence; \begin{equation}\label{eq:L_j}
L_{j}=[L_{j-1},\pi_1], \quad \text{where }\,\, L_1=\pi_1. 
\end{equation}
Using the fibration given by $F$, we define the monodromy group, which is a subgroup of the group of automorphisms of $\pi_1$. Then denote by $\mathcal{O}=\OO_\gamma$ the normal subgroup of $\pi_1$ generated by the  orbit of $\gamma$ under the action by the monodromy group  (see \cite{MNOP1}). 

In \cite{MNOP1}, we defined the \emph{orbit depth} $k$ of the cycle $\gamma$ of $F$ by
\begin{equation}\label{k}
k=\sup\{j\geq1| \mathcal{O}\cap L_j\not\subset [\OO,\pi_1]\}=\min\{j\geq1| \mathcal{O}\cap L_{j+1}\subset [\OO,\pi_1]\},
\end{equation}
and  showed that
the orbit depth $k$ bounds the length of the iterated integral $M_\mu$, for the displacement map $\Delta_{\epsilon}$ of any polynomial deformation \eqref{def}, i.e. that
\begin{equation}\label{lk}
\ell\leq k.
\end{equation}

In \cite{MNOP2}, we gave an example of a system having unbounded depth and formulated 
 the following conjectures:

\begin{conjecture}\label{conj} \hfill
\begin{enumerate} 
\item[(i)]
 For any polynomial  $F$ and any non-trivial cycle $\gamma$ of $F$, either the depth is unbounded, or it is $1$, or $2$. 

\item[(ii)] For any $F$ and its cycle $\gamma$, either there exist deformations $\omega$, whose first non-zero Melnikov function $M_\mu$ is of arbitrary high length, or for any deformation $\omega$, the length of $M_\omega$ is $1$ or $2$.
\end{enumerate}
\end{conjecture}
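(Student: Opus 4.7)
The plan is to reduce the conjecture to a statement about filtered Lie algebras with a group action. Since for a generic value $z$ the fiber $F^{-1}(z)$ is a punctured Riemann surface, its fundamental group $\pi_1(F^{-1}(z))$ is free, and one can pass to the associated graded free Lie algebra $\mathfrak{g}=\bigoplus_j L_j/L_{j+1}$. The monodromy group preserves the lower central filtration, so it acts on $\mathfrak{g}$, and the normal subgroup $\mathcal{O}$ produces a monodromy-invariant graded Lie subalgebra $\mathfrak{o}\subset\mathfrak{g}$. The depth $k$ is then the largest $j$ for which $\mathfrak{o}_j$ escapes the Lie ideal of $\mathfrak{g}$ generated by $\mathfrak{o}_1$. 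In this language, part (i) asserts that $\mathfrak{o}$ is either ``abelian modulo this ideal'' (depth $1$), one step beyond (depth $2$), or never closes up.

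First I would establish depth $1$ and $2$ as rigidity cases, characterized cohomologically in terms of the monodromy representation on $H_1(F^{-1}(z))$ and one further obstruction class. The heart of the argument is a \textbf{bootstrap step}: assume the depth is finite with $k\geq 3$ and choose a witness $w\in\mathcal{O}\cap L_k\setminus[\mathcal{O},\pi_1]$. For any monodromy automorphism $\sigma$ acting non-trivially on $w$ modulo $L_{k+1}$, the commutator-type element $\sigma(w)w^{-1}$ lies in $\mathcal{O}\cap L_{k+1}$. I would show that its class is independent of the ideal $[\mathcal{O},\pi_1]$ in degree $k+1$, producing a witness at depth $k+1$ and, by iteration, at every larger degree. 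Hence intermediate finite depths $\geq 3$ cannot occur.

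The hard part is precisely the independence claim inside the bootstrap: while $\sigma(w)w^{-1}$ obviously lies in deeper commutators, ruling out accidental cancellation inside $[\mathcal{O},\pi_1]$ requires a non-vanishing statement for Picard--Lefschetz-type derivations on the graded pieces of the free Lie algebra. A natural strategy is to exhibit, using a Magnus expansion, an explicit Lie polynomial detecting $w$ in degree $k$ and to check that $\log\sigma$ acts on it as a derivation that does not land in the degree-$(k+1)$ part of the ideal. One must exclude extra relations among the monodromy generators, which may force restricting to generic $F$ or exploiting a concrete description of $\mathrm{Mon}$ as a subgroup of $\mathrm{Aut}(\pi_1)$ with enough Dehn-twist-like generators.

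For part (ii), the bound $\ell\leq k$ from \cite{MNOP1} reduces one direction of the dichotomy for $\ell$ to the dichotomy for $k$. For the converse, in the unbounded-depth case I would construct explicit $1$-forms $\omega$ whose first non-zero Melnikov function realizes arbitrarily high iterated integral length. This rests on pairing the leading iterated-integral coefficient with classes in $\mathcal{O}\cap L_\ell/\mathcal{O}\cap L_{\ell+1}$: the witnesses produced in the bootstrap of part (i) are exactly what is needed to detect $M_\mu$ at the prescribed depth, so persistence of depth translates into persistence of length.
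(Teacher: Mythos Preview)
The statement you are attempting to prove is Conjecture~1.1 in the paper, and the paper does \emph{not} prove it: it is stated as an open conjecture, motivated by the Tits alternative and by Casale's result on Godbillon--Vey length, and the rest of the paper supplies supporting evidence (the four-lines examples of Theorem~1.5) rather than a proof. So there is no ``paper's own proof'' to compare against; what you have written is a proposed attack on an open problem.

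As a proposal it has a concrete gap in the bootstrap step. You write that for a monodromy automorphism $\sigma$ acting \emph{non-trivially} on $w$ modulo $L_{k+1}$, the element $\sigma(w)w^{-1}$ lies in $\mathcal{O}\cap L_{k+1}$. This is backwards: monodromy preserves each $L_j$, so $\sigma(w)w^{-1}\in L_k$, and its class in $L_k/L_{k+1}$ is exactly $\sigma(\bar w)-\bar w$ in the abelian quotient; if $\sigma$ acts non-trivially on $\bar w$ then $\sigma(w)w^{-1}\notin L_{k+1}$. To push a witness one step deeper you would need the opposite hypothesis (that the monodromy acts trivially on $L_k/L_{k+1}$, i.e.\ is unipotent at that level) together with a separate argument that the resulting element is not already absorbed by $[\mathcal{O},\pi_1]$. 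You flag the second point as ``the hard part'', but the first point means the mechanism producing the deeper witness is not the one you describe; Picard--Lefschetz monodromy is unipotent on $H_1$, not on all $L_k/L_{k+1}$, so one cannot simply iterate.

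There is also a structural warning from the paper's own evidence: in the trapezoid case $\pi_1^{\mathcal{O}\text{-}ab}$ is non-solvable (so the graded Lie-algebra picture is as rich as in the parallelogram), yet the depth is still $\le 2$. Thus any argument that tries to read the depth directly off the monodromy action on the associated graded of the free group, without using the specific position of $\mathcal{O}$ inside $\pi_1$, will not distinguish these two cases. Your plan would need an ingredient that sees this distinction before the bootstrap can be trusted.
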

\medskip

This conjecture is similar in spirit to the result of Casale \cite{C} , concerning the first integral and the length of the corresponding Godbillon-Vey sequence of a system
or  the Tits alternative \cite{T}. 
\\

One of the central components in the study is the \emph{orbit complement abelianization group} or $\mathcal{O}$\emph{-abelianization}
\begin{equation}\label{G}
\Ocom:=\frac{\pi_1}{\mathcal{O}\cap L_2}.
\end{equation}

Note that it is associated to the couple $(F,\gamma)$ and does not depend on the deformation form $\omega$.

In order to give some steps towards the proof of the above conjecture, we consider some other, more classical invariants associated to the group $\Ocom$ : 
its \emph{nilpotence class} $n$ and its \emph{derivative length} $d$, which we relate to the orbit depth $k$ and length $\ell$ of the first nonzero Melnikov function (see section \ref{sect:groups} for definitions). 

\begin{proposition}\label{prop:ineq} Let $F\in\Bbb{R}[x,y]$ be a polynomial,  $\gamma\in\pi_1(F^{-1}(z))$ be a real cycle and $\omega$ a polynomial $1$-form as above, and let $\Ocom$ be the group defined in \eqref{G}. 
Let $n$ be the \emph{nilpotence class} of $\Ocom$, $d$ the \emph{derivative length} of $\Ocom$, $k$ the \emph{orbit depth} and $\ell$ the  \emph{length of the first nonzero Melnikov} function of the deformation \eqref{def} along $\gamma$. Then the following inequalities hold 
\begin{equation}\label{inequalities}
\ell\leq k\leq n+1, \quad d\leq n.
\end{equation} 

\end{proposition}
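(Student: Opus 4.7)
The inequality $\ell \le k$ is already established in \cite{MNOP1}, so the plan is to prove the two group-theoretic inequalities $k\le n+1$ and $d\le n$ by straightforward manipulation of the lower central and derived series.

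First I would clarify what nilpotence class $n$ of $\Ocom$ means in terms of subgroups of $\pi_1$. Writing $\bar L_j$ for the lower central series of $\Ocom=\pi_1/(\OO\cap L_2)$, the image of $L_j$ under the quotient map is exactly $\bar L_j$, so the condition $\bar L_{n+1}=1$ is equivalent to
\begin{equation*}
L_{n+1}\subseteq \OO\cap L_2\subseteq \OO.
\end{equation*}
Taking commutators with $\pi_1$ on both sides then gives
\begin{equation*}
L_{n+2}=[L_{n+1},\pi_1]\subseteq [\OO,\pi_1],
\end{equation*}
whence $\OO\cap L_{n+2}\subseteq [\OO,\pi_1]$. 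By the minimum characterization of $k$ in \eqref{k}, this yields $k\le n+1$.

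For the inequality $d\le n$, I would use the standard inclusion relating the derived and lower central series of any group $G$: $G^{(i)}\subseteq L_{i+1}$, proved by induction from $G^{(0)}=G=L_1$ and
\begin{equation*}
G^{(i+1)}=[G^{(i)},G^{(i)}]\subseteq [L_{i+1},L_1]=L_{i+2}.
\end{equation*}
Applied to $G=\Ocom$ with $\bar L_{n+1}=1$, this gives $\Ocom^{(n)}\subseteq \bar L_{n+1}=1$, hence $d\le n$.

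I do not anticipate a real obstacle: the proposition is essentially a dictionary between three classical invariants (lower central depth of an orbit, nilpotence class, derived length), and both new inequalities follow from unwinding the definition \eqref{G} of $\Ocom$ and applying well-known inclusions among characteristic subgroups. The only point requiring care is making sure that the lower central series of the quotient $\Ocom$ is correctly identified with the images of $L_j$, so that ``nilpotence class $n$'' is translated into the concrete statement $L_{n+1}\subseteq \OO\cap L_2$ used in the argument above.
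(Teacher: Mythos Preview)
Your proposal is correct and follows essentially the same argument as the paper: you translate nilpotence class $n$ of $\Ocom$ into $L_{n+1}\subseteq\OO\cap L_2\subseteq\OO$, commute with $\pi_1$ to get $L_{n+2}\subseteq[\OO,\pi_1]$ and hence $k\le n+1$, and then invoke the standard inclusion $G^{(i)}\subseteq G_{i+1}$ for $d\le n$, with $\ell\le k$ cited from \cite{MNOP1}. The paper does exactly this, only phrasing the first step as ``$(\Ocom)_j=\{\bar e\}$ implies $L_j\subset\OO$'' on generators and quoting the derived/central inclusion as equation~\eqref{eq:G^jG_j+1} rather than reproving it.
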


\begin{remark}
Although $d\le n$, there is no evident relationship between $d$ and $k$.
\end{remark}

\begin{approach} \label{GA}
The Poincar\'e (holonomy) map of a deformation \eqref{def} induces a homomorphism 
\begin{equation}\label{eq:Poincare general}
P_\omega:\pi_1\to \C[\epsilon]\otimes\operatorname{Diff}\left(\C,0\right), \quad	P_\omega(\delta)=\left\{z\mapsto z+\epsilon\int_{\delta}\omega+O(\epsilon^2)\right\},
\end{equation}
where $P_{\omega}(\delta)$ is the Poincar\'e map with respect to the foliation \eqref{def} along the cycle $\delta$.

Assume that the deformation \eqref{def} preserves the continuous families of cycles corresponding to elements of $\OO\cap L_2$. This means that $P_\omega(\OO\cap L_2)=\{Id\}$ and the map \eqref{eq:Poincare general} descends to the homomorphism 
\begin{equation}\label{eq:homomorphism}
		P_{\omega}:\Ocom\rightarrow \C[\epsilon]\otimes\Diff(\C,0).
\end{equation}
 Moreover, if one defines a deformation 
form $\omega$ in such  a way that the obtained subgroup of diffeomorphisms is parabolic, then the group $\Ocom$ inherits properties of subgroups of parabolic diffeomorphisms. 
Using these properties, one could conclude that either $\Ocom$ is abelian, and so the orbit depth is less or equal to 2, or it is non-solvable. 
\medskip 

However, the difficulty here lies in finding a one-form $\omega$ such that, for a cycle $\sigma\in [\pi_1,\pi_1]\setminus\OO\cap L_2$, $P_{\omega}(\sigma)\ne id$, but it is the identity along any commutator in the orbit. 

\end{approach}

\bigskip

Here, we realize the above approach in the case of a Hamiltonian $F$ given as a product of four real lines
\begin{equation}\label{F}
\begin{aligned}
&F(x,y)=f_1(x,y)f_2(x,y)f_3(x,y)f_4(x,y), \\
& f_i=a_ix+b_iy+c_i, (a_i,b_i)\ne (0,0),\quad (a_i,b_i)\ne(a_j,b_j), \text{ for } i\ne j.
\end{aligned}
\end{equation}
Recall that, as studied in \cite{MNOP2}, in the above case, if $f_i$, $i=1,\ldots,4,$ consist of two pairs of parallel lines, the orbit depth $k$ is infinite. 
We  distinguish three cases of a hamiltonian given by a product of four lines:

\begin{enumerate}
\item The four lines are in \emph{generic} position (no parallel lines among $f_i$, $i=1,\ldots,4$, for different indices).\\
\item one of the bounded domains bounded by these lines is a quadrilateral with exactly one pair of parallel opposite sides (we call it a \emph{trapezoid}).\\
\item one of the bounded domains bounded by these lines is a parallelogram (we call  it a \emph{parallelogram}).\\
\end{enumerate}

\begin{theorem} \label{Thm:1}
Let $F$ be the product of four real lines \eqref{F}, and  $\gamma\in \pi_1(F^{-1}(z))$ a continuous family of real cycles of $F$. 
\begin{enumerate}
\item In the case that the  four lines are in generic position, then  $\Ocom$ is abelian, so its nilpotence class $n$ and derived length $d$ are equal to 1. Hence the orbit depth $k\leq2$, and the length of the first non-zero Melnikov function is $\ell\le 2$.
\item In the trapezoid case $\Ocom$ is non-solvable $d=n=\infty$, but the orbit depth is $k\leq2$. Hence,  the length of the first non-zero Melnikov function of any deformation is bounded by $2$,  $\ell\leq2$.  
\item In the parallelogram case $\Ocom$ is non-solvable $d=n=\infty$ and the orbit depth is infinite $k=\infty$. 
\end{enumerate}
\end{theorem}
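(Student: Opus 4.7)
The plan is to apply Proposition~\ref{prop:ineq} in each of the three cases, reducing the statement to an analysis of the monodromy orbit $\OO$ of $\gamma$ in $\pi_1(F^{-1}(z))$ and of the quotient $\Ocom$. The unifying geometric picture: for $|z|$ small, the smooth fiber $F^{-1}(z)$ is a simultaneous smoothing of the four-line arrangement $\{F=0\}$ at its six nodes, so $\pi_1(F^{-1}(z))$ is generated by six vanishing cycles $\gamma_{ij}$ around the intersections $\{f_i=f_j=0\}$ together with a few loops encoding the behavior at infinity in $\mathbb{P}^2$. The three cases differ exactly in the combinatorial type of this behavior at infinity: four distinct points in the generic case, one double point in the trapezoid case, and two double points in the parallelogram case.

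In the generic case (1), I would show that $\OO$ contains the whole commutator subgroup $[\pi_1,\pi_1]$, so that $\Ocom=\pi_1/L_2$ is abelian. The key tools are the Picard--Lefschetz formula, which presents the monodromy around $z=0$ as a product of Dehn twists along the $\gamma_{ij}$, and the monodromy around the remaining finite critical values of $F$, which links the various vanishing cycles. When no two $f_i$ are parallel, any two vanishing cycles attached to intersecting lines have nonzero geometric intersection; applying the corresponding Dehn twists to $\gamma$ iteratively and taking commutators with ambient elements of $\pi_1$ produces every generator of $[\pi_1,\pi_1]$. Consequently $n=d=1$, and \eqref{inequalities} yields $k\le 2$ and $\ell\le 2$.

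In the trapezoid case (2), one pair of parallel lines glues two points at infinity into a single double point, adding a relation in $\pi_1(F^{-1}(z))$ and leaving a ``long'' cycle $\delta$ encircling the parallel strip outside $\OO$. Together with a suitable conjugate, $\delta$ generates a non-abelian free subgroup of $\Ocom$, forcing $d=n=\infty$. Since \eqref{inequalities} is then vacuous for $k$, I would bound $k$ directly. The cleanest route is via General approach~\ref{GA}: construct a polynomial one-form $\omega$ whose Poincar\'e map $P_\omega$ factors through $\Ocom$ with image in the parabolic subgroup of $\C[\epsilon]\otimes\Diff(\C,0)$, then show that $P_\omega$ vanishes on $L_3\cap\OO$ modulo $[\OO,\pi_1]$. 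An alternative, purely combinatorial route is to exhibit an explicit finite generating set of $\OO$ and check inside $\pi_1$ that $L_3\cap\OO\subseteq[\OO,\pi_1]$. Either way yields $k\le 2$, hence $\ell\le 2$.

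In the parallelogram case (3), the two double points at infinity leave two independent ``long'' cycles outside $\OO$, producing two independent free generators of $\Ocom$; a standard free-group argument then gives $\Ocom^{(j)}\neq\{1\}$ for every $j$, hence $d=n=\infty$. The statement $k=\infty$ is the result of \cite{MNOP2}, which is reused directly. The main obstacle is case (2): one needs $k\le 2$ even though $n=\infty$ makes the inequality $k\le n+1$ useless, forcing a direct analysis of the orbit $\OO$ inside the lower central series of $\pi_1$. Producing the one-form $\omega$ realizing the parabolic Poincar\'e map of General approach~\ref{GA} is the technical heart of the argument.
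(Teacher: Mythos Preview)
Your outline has the right overall architecture (reduce to Proposition~\ref{prop:ineq}, cite \cite{Uribe} and \cite{MNOP2} for cases (1) and (3)), but the two substantive arguments in case~(2) are mis-aimed.

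\medskip
\textbf{Non-solvability of $\Ocom$ (cases (2) and (3)).} You propose to find ``long'' cycles outside $\OO$ whose images generate a free subgroup of $\Ocom$. The paper does the opposite of what you suggest: it uses General approach~\ref{GA} here, not for bounding $k$. Concretely, after normalizing $F=(x-1)(x+1)f_3f_4$ one takes $\omega=F^2\bigl(\tfrac{dx}{x-1}+F\tfrac{dx}{x+1}\bigr)$; the deformation has a reflection symmetry preserving the real center, so $P_\omega$ descends to $\Ocom$, and one computes $P_\omega([\delta_1,\delta_2])=z+\epsilon^2W(z^2,z^3)\int_{[\delta_1,\delta_2]}\tfrac{dx}{x-1}\tfrac{dx}{x+1}+O(\epsilon^3)\neq z$. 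Thus the image is a non-abelian subgroup of parabolic germs, hence non-solvable by Lemma~\ref{lemma:GDiff_1}, and so is $\Ocom$. Your free-subgroup route might be made to work, but as written it is only a hope: you give no mechanism for showing that the images of your long cycles in the \emph{quotient} $\Ocom$ satisfy no relations.

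\medskip
\textbf{The bound $k\le 2$ in the trapezoid case.} Your ``cleanest route'' via $P_\omega$ cannot succeed: vanishing of $P_\omega$ on $L_3\cap\OO$ would only say something about the image of one particular homomorphism, not that $L_3\cap\OO\subset[\OO,\pi_1]$; you would need injectivity of $P_\omega$ on the relevant subquotient, which is exactly the difficulty flagged after \eqref{eq:homomorphism}. The paper follows your ``combinatorial route'' (b), but with a geometric ingredient you have missed. In the trapezoid configuration the two non-parallel lines $f_3,f_4$ meet at a finite point $p_5$, so besides the quadrilateral containing $\gamma$ there is an adjacent \emph{triangle} containing a second real cycle $\gamma_1$, with $(\gamma,\gamma_1)=1$. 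Hence $\OO_{\gamma_1}\subset\OO_\gamma$. From \cite{MNOP1} one knows the triangle orbit $\OO_{\gamma_1}$ already contains $[\delta_2,\delta_3]$ (and has depth~$2$). Writing the generators of $\OO_\gamma/[\OO_\gamma,\pi_1]$ as in \cite{MNOP2}, the higher iterated commutators $[\delta_1\delta_2,[\delta_2,\ldots,[\delta_2,\delta_2\delta_3]\ldots]]$ all contain $[\delta_2,\delta_2\delta_3]=\delta_2[\delta_2,\delta_3]\delta_2^{-1}\in\OO_\gamma$ as an inner factor, so they lie in $[\OO_\gamma,\pi_1]$. This is precisely what distinguishes the trapezoid from the parallelogram, where $f_3\parallel f_4$ and no such triangle exists.
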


\begin{remark}
In order to prove the conjecture that the orbit depth is an optimal bound for the length of Melnikov functions \cite{MNOP1},
the problem resides of the realization of the length. In \cite{MNOP2} we provide a deformation of the parallelogram having a first non-zero Melnikov function of length 3, which already distinguishes this case from the trapezoid, in terms of the length, where the length is less than or equal to 2.
\end{remark}

 The case of four lines in generic position has been studied in \cite{Uribe,MNOP1}, and it is known that $[\pi_1,\pi_1]\subseteq\OO$. Hence, 
 $\Ocom$ is abelian, and $k\leq 2$. We recall it here for completeness. In this work, our aim is to focus on more degenerated situations, the trapezoid and parallelogram cases.

Since the nilpotence class $n$ of $\Ocom$ provides an upper bound for the orbit depth $k$, we also want to understand conditions under which 
$\Ocom$ is non-nilpotent (or non-solvable). From the definition of the homomorphism in \eqref{eq:homomorphism} we see that the nilpotence class of $\Ocom$ is greater than the nilpotence class of its image under $P_{\omega}$. 
In this sense, the class of nilpotence of $\Ocom$ is related with the type of deformations preserving centers in $\OO\cap L_2$, and therefore, with the type of integrability of these deformations.

\begin{theorem}\label{Thm:2}\hfill
	\begin{enumerate}
\item In the parallelogram and the trapezoid case, there exist polynomial $1$-forms $\omega$ such that the  deformations \eqref{def} preserve the pairs of parallel lines and have a first integral of Riccati type. 
\item Moreover, there are deformations, preserving the pairs of parallel lines,  having  Godbillon-Vey sequences of any finite length.
	\end{enumerate}
\end{theorem}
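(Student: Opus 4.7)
The plan is to reduce both parts to explicit constructions in coordinates adapted to the pairs of parallel lines. After an affine change of variables, normalize to $F(x,y)=x(x-1)y(y-1)$ in the parallelogram case, with parallel line pairs $\{y=0,y=1\}$ and $\{x=0,x=1\}$; the trapezoid case is handled analogously, taking $F=y(y-1)\,\psi(x,y)$ with $\psi=f_3f_4$ non-parallel and parallel pair $\{y=0,y=1\}$. A direct invariance computation (from $\omega\wedge df_i$ being divisible by $f_i$) shows that a polynomial $1$-form $\omega=a\,dx+b\,dy$ preserves all four lines iff, in the parallelogram case, $y(y-1)\mid a$ and $x(x-1)\mid b$; in the trapezoid case a slightly more intricate system of divisibilities on $(a,b)$ comes from the two non-parallel lines.

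For part (1) in the parallelogram case take the ansatz
\[
\omega \;=\; y(y-1)\bigl[c_{1}(x)+c_{2}(x)\,y(y-1)\bigr]\,dx,
\]
with $c_{1},c_{2}\in\C[x]$. Along a leaf of $dF+\epsilon\omega=0$ one has $dF/dx=-\epsilon\,a$; substituting $y(y-1)=F/[x(x-1)]$ rewrites this as the Riccati ODE
\[
F'(x) \;=\; -\frac{\epsilon\,c_{1}(x)}{x(x-1)}\,F \;-\; \frac{\epsilon\,c_{2}(x)}{[x(x-1)]^{2}}\,F^{2}
\]
for $F$ viewed as a function of $x$ along the leaf, so the first integral of the deformation is obtained by solving a Riccati equation. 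A genuine (non-Bernoulli) Riccati is produced by adjoining an $x(x-1)(2y-1)[D_{0}(x)+D_{1}(x)y(y-1)]\,dy$ summand whose $(2y-1)$ factor cancels in $F_{x}b-F_{y}a$ to generate the missing constant coefficient. In the trapezoid, take $\omega=y(y-1)\,\psi(x,y)\,\tilde c(x)\,dx$: preservation of $f_3,f_4$ is automatic because $\psi$ vanishes on them, and $dF/dx=-\epsilon\,\tilde c(x)\,F$ along leaves is a (degenerate) Riccati ODE; higher-degree analogs recover the full Riccati.

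For part (2), iterate the parallelogram ansatz with
\[
\omega_{n}\;=\;y(y-1)\sum_{k=1}^{n}c_{k}(x)\,\bigl[y(y-1)\bigr]^{k-1}\,dx,
\]
so that along leaves
\[
F'(x)\;=\;-\epsilon\sum_{k=1}^{n}\frac{c_{k}(x)}{[x(x-1)]^{k}}\,F^{k},
\]
an Abel/Chini-type ODE whose general solution requires $n$ nested quadratures. Assembling the Godbillon--Vey sequence for $dF+\epsilon\omega_{n}$ from the successive integrating factors via the recursion $d\omega_{j}=\omega_{0}\wedge\omega_{j+1}$ produces a sequence of length at most $n$, and for generic $c_{k}$ the length equals $n$, realizing any prescribed finite length. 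A parallel construction works in the trapezoid, with $y(y-1)\cdot\psi$ playing the role of $y(y-1)\cdot x(x-1)$.

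The main obstacle is sharpness in part (2): showing that the Godbillon--Vey sequence does not collapse prematurely. This reduces to algebraic independence, over $\C(x,y)$, of the iterated integrals produced at successive quadratures, which I would handle by a transcendence/monodromy argument in the spirit of Casale~\cite{C}: for a generic choice of polynomials $c_{k}(x)$, the differential Galois group of the higher-order linear ODE obtained by linearizing the Riccati tower is maximal, obstructing any algebraic relation that would shorten the Godbillon--Vey sequence.
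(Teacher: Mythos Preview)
Your construction shares the paper's underlying idea—take a deformation of the form $\eta_0=dF+\epsilon\,\varphi(x,F)\,dx$ with $\varphi$ polynomial in $F$, adapted to the parallel pair—but the execution differs and you over-complicate in two places.

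The paper normalizes the parallel pair to $f_1=x-1$, $f_2=x+1$ and takes $\omega=p_1(F)\tfrac{df_1}{f_1}+p_2(F)\tfrac{df_2}{f_2}$, so $\eta_0=dF+\epsilon\varphi(x,F)\,dx$. The key computational observation (which your sketch is missing) is that one may simply set $\eta_j=\epsilon\,\partial_F^{\,j}\varphi\cdot dx$ for $j\ge1$: since every $\eta_j$ is then proportional to $dx$, all cross-terms $\eta_k\wedge\eta_\ell$ in the Godbillon--Vey recursion \emph{vanish identically}, and the sequence terminates at length $\deg_F\varphi+1$. Taking $\deg_F\varphi=2$ gives length $3$ (Riccati); taking $\deg_F\varphi=n-1$ gives length $n$, proving both parts at once. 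Because $f_3,f_4$ never enter this calculation, the parallelogram and trapezoid are handled uniformly—your separate treatment of the two cases is unnecessary.

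For part~(1), your ODE-along-leaves reduction is correct, but the digression about adjoining a $dy$ summand to upgrade Bernoulli to a ``genuine'' Riccati is beside the point: the paper's own example $\varphi=\tfrac{F}{x-1}+\tfrac{F^2}{x+1}$ also yields a Bernoulli ODE for $F$ along leaves, yet its Godbillon--Vey sequence has length $3$, which is all that ``Riccati type'' requires here.

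For part~(2) there is a gap and a misreading. The gap: you assert that a Godbillon--Vey sequence can be ``assembled from the successive integrating factors via the recursion'' but never carry this out; the collinearity trick $\eta_k\wedge\eta_\ell=0$ is exactly what makes the recursion close trivially, and without it your claim is unjustified. The misreading: the theorem only asserts the \emph{existence} of a Godbillon--Vey sequence of each finite length, not that this length is minimal for the foliation. Your entire final paragraph on sharpness via differential Galois theory is therefore unneeded—and indeed the paper explicitly leaves optimality open in the remark following the proof.
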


\begin{remark}
In \cite{CLNLP} the authors define the length of a foliation as the minimal length among all Godbillon-Vey sequences for the foliation. They mention that they do not know any example of finite length greater than 4. Deformations in second part of Theorem \ref{Thm:2} could provide such examples if one can prove that its Godbillon-Vey sequence has optimal length.

\end{remark}

\begin{conjecture}\hfil
	\begin{enumerate}

\item The non-solvability of the  group $\Ocom$ is characterized by the presence of a pair of parallel curves in the hamiltonian foliation.
\item The non-bounded orbit depth is characterized by the presence of two pairs of parallel curves in the hamiltonian foliation.

\item The type of singularity given by a pair of parallel curves at the line at infinity characterizes the non-solvability of $\Ocom$.\\
	\end{enumerate}

\end{conjecture}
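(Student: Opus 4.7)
Since the conjecture has three parts, each asserting an ``if and only if'' characterization, I would split each part into a \emph{sufficiency} direction (parallel curves $\Rightarrow$ non-solvability/unbounded depth) and a \emph{necessity} direction (the converse). The sufficiency side should be accessible by extending the explicit constructions of Theorem~\ref{Thm:2} from the four-line model to a general polynomial $F$; the necessity side is the serious mathematical content and will require a classification of the local monodromy at infinity. The three parts are tightly interlocked: part~(3) essentially refines part~(1) by localizing the obstruction to the line at infinity, and part~(2) upgrades part~(1) from non-solvability to unbounded orbit depth.

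\textbf{Sufficiency.} Suppose $F$ has two components $f_i, f_j$ with $df_i\wedge df_j=0$, i.e.\ they are level sets of a common linear form $\lambda$. I would try to adapt the Riccati-type deformation used to prove Theorem~\ref{Thm:2}(1) by writing $F=\lambda\cdot G$ (up to constants) and choosing $\omega$ built from $1$-forms $\frac{d\lambda}{\lambda-c}$ that are preserved by the pair of parallel components. The resulting holonomy representation $P_\omega:\Ocom\to\Diff(\C,0)$ should factor through $\mathrm{PSL}_2$ with non-solvable image; this forces $\Ocom$ to be non-solvable, giving part~(1). For part~(2), with two independent pairs of parallel components I would iterate this construction: use the two pairs to build deformations whose Godbillon--Vey sequences (Theorem~\ref{Thm:2}(2)) are of arbitrary length, and then extract from the non-triviality of nested commutators $[[\cdots[\gamma_1,\gamma_2],\gamma_3],\cdots]$ in $\OO\cap L_j$ the statement that the orbit depth $k$ as defined in \eqref{k} is infinite, following the parallelogram case of Theorem~\ref{Thm:1}(3).

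\textbf{Necessity and the main obstacle.} For the contrapositive of part~(1), the goal is to show that if no two components of $F$ are parallel, then $\Ocom$ is solvable. The strategy is to argue that transverse intersection of all components produces a sufficiently ``generic'' Picard--Lefschetz monodromy action, so that the orbit $\mathcal{O}_\gamma$ captures all of $[\pi_1,\pi_1]$ modulo $[\OO,\pi_1]$, making $\Ocom$ abelian as in the generic four-line case of Theorem~\ref{Thm:1}(1). Part~(3) then proposes that the precise obstruction is local at infinity: a pair of parallel affine lines meet the line at infinity at a single point with common tangent direction, producing a non-Fuchsian (or non-generic Fuchsian) singularity whose local monodromy is already non-solvable. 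I would classify the singularity types of the compactified foliation at each point of the line at infinity, identify which types admit a non-solvable local monodromy, and show via a Van Kampen--type argument that local non-solvability propagates to $\Ocom$. The hard part of the whole program is precisely this necessity direction: proving that the \emph{absence} of parallel components forces solvability is a global assertion about the Picard--Lefschetz representation of $F$, not reducible to the local analysis on which the sufficiency side relies, and is likely to demand input from the theory of variations of mixed Hodge structure or from a deeper understanding of which normal subgroups of $\pi_1(F^{-1}(z))$ arise as orbits $\OO_\gamma$.
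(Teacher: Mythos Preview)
The statement you are attempting to prove is explicitly a \emph{conjecture} in the paper, not a theorem: the paper offers no proof whatsoever, only the motivating evidence of Theorem~\ref{Thm:1} (the four-line examples) and Theorem~\ref{Thm:2} (the Godbillon--Vey constructions). Consequently there is nothing in the paper to compare your proposal against; what you have written is a research programme, not a proof, and the paper's authors would presumably agree that the necessity direction you flag is the genuine open problem.

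A few specific points on the content of your plan. Your sufficiency sketch for part~(2) conflates two different invariants: the Godbillon--Vey length of Theorem~\ref{Thm:2}(2) measures the complexity of a first integral of the \emph{deformed} foliation, whereas the orbit depth $k$ in \eqref{k} is a purely group-theoretic invariant of the pair $(F,\gamma)$, independent of any deformation. The paper's proof that the parallelogram has $k=\infty$ (cited from \cite{MNOP2}) is a direct computation in $\pi_1/[\OO,\pi_1]$ of the tower of commutators $[\delta_1\delta_2,[\delta_2,\ldots[\delta_2,\delta_2\delta_3]\ldots]]$, not an argument via Godbillon--Vey sequences; you would need to generalize \emph{that} computation to two arbitrary pairs of parallel curves. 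Similarly, in your sufficiency for part~(1), the factorization ``$F=\lambda\cdot G$'' is not what parallel components give you: rather $F=(\lambda-c_1)(\lambda-c_2)G$, and the deformation you want is the one in the proof of Theorem~\ref{Thm:1}(2)--(3), namely $\omega=F^2\bigl(\tfrac{d\lambda}{\lambda-c_1}+F\tfrac{d\lambda}{\lambda-c_2}\bigr)$, whose holonomy on a suitable commutator is computed via the Wronskian $W(z^2,z^3)$. Finally, your necessity strategy (show $\OO\supset[\pi_1,\pi_1]$ in the absence of parallel components) is exactly the mechanism behind the generic case of Theorem~\ref{Thm:1}(1), but extending it beyond lines in generic position is precisely what the authors do not know how to do; invoking ``variations of mixed Hodge structure'' is a placeholder rather than a method here.
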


Here by parallel curves, we mean two  level curves $f^{-1}(c_1)$ and $f^{-1}(c_2)$, of the same function $f$, for $c_1\ne c_2$.  Parallel curves interesect only at the line at infinity. 

\section{Nilpotence class and derivative length}\label{sect:groups}

\begin{definition}\hfill
	\begin{enumerate}
		\item[(i)]
		Given a group $G$, let $G_i$ be its \emph{lower central sequence}:
		
		$$
		G=G_1\supset G_2\supset \cdots, \quad G_{j+1}=[G_j,G].
		$$
		If there exists $j\in\N$ such that $G_j=\{e\}$, we say that the group is \emph{nilpotent} and define
		its \emph{nilpotence class} $n=n(G)$ as
		\begin{equation}\label{n}
		n=\min\{j\geq1| G_{j+1}= \{e\}\},
		\end{equation}
		where $e$ is the identity element in $G$.
		
		\item[(ii)] Similarly, the \emph{upper central sequence}  $G^j$ is 
		$$
		G=G^0\supset G^1\supset\cdots,\quad G^{j+1}=[G^j,G^j].
		$$
		If there exists $j$ such that $G^{j}=\{e\}$, we say that the group is \emph{solvable} and define its \emph{derived length} $d$ as
		\begin{equation}\label{n}
		d=\min\{j\geq1| G^{j}= \{e\}\}.
		\end{equation}
	\end{enumerate}
\end{definition}
Note that
\begin{equation}\label{eq:G^jG_j+1}
G_{j+1}\supset G^{j}.
\end{equation}

This gives
$d\le n$ and in particular any nilpotent group is solvable.
\medskip

We apply these two notions to the orbit complement abelianization group $\Ocom$ given in \eqref{G}.

\begin{proof}[Proof of Propostion \ref{prop:ineq}]
	 In order to prove that $k\le n+1$, it suffices to prove that 
	$
	(\Ocom)_j=\{\bar{e}\}
	$ implies $\OO\cap L_{j+1}\subset[\OO,\pi_1]$, for any $j$.

	We claim first that the assumption $(\Ocom)_j=\{\bar{e}\}$, implies
	\begin{equation}\label{LjO}
	L_j\subset\OO.
	\end{equation} 
	Indeed, let
	$\sigma=[[\cdots[\gamma_1,\gamma_2],\cdots,\gamma_{j-1}],\gamma_{j}]$ be a generator of $L_j$ and let $\hat{\sigma}\in\Ocom$ be the  class of this $\sigma_i$ in $(\Ocom) $. Then $\hat{\sigma}\in\left(\Ocom\right)_j=\{e\}$, i.e. $\sigma\in\OO\cap L_2$. We thus have \eqref{LjO}.
	
	Hence, $L_{j+1}=[L_j,\pi_1]\subset[\OO,\pi_1]$, showing that $\OO\cap L_{j+1}\subset[\OO,\pi_1]$.
	
 	The relation $d\le n$ comes from expression \eqref{eq:G^jG_j+1}. On the other hand, it is known from \cite{MNOP1} that the length $\ell$ of the first non-zero Melnikov function is bounded by the orbit depth $k$. Therefore, $\ell\le k\le n+1$.
\end{proof}

\section{Germs of diffeomorphisms}

Given a germ of diffeomorphism $f\in \Diff(\C,0)$, we say that it is \emph{parabolic} if it is of the form
$f(z)=z+o(z)$. If $f$ is not the identity, then $f=z+az^{p+1}+o(z^{p+1})$, with $a\neq 0$. We call  $p$ the \emph{level} of $f$.
Let $\Diff_1(\C,0)\subset \Diff(\C,0)$ denote the subgroup of parabolic germs.

The General approach \ref{GA}  is based on well-known facts about the solvability of the group of parabolic germs $\Diff_1(\C,0)$. 

\begin{lemma} \label{lem:com}(Proposition 6.11, \cite{IY})\label{lemma:levels}
	Let $f=z+az^{p+1}+\cdots$ and $g=z+bz^{q+1}+\cdots$. Then $[f,g](z)=z+ab(p-q)z^{p+q+1}+o(z^{p+q+1})$.
\end{lemma}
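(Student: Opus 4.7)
The plan is a direct power-series computation, based on the identity $[f,g]=fgf^{-1}g^{-1}=(f\circ g)\circ(g\circ f)^{-1}$, which reduces the problem to comparing the Taylor expansions of $f\circ g$ and $g\circ f$ up to order $z^{p+q+1}$.

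First I would substitute $g(z)=z+bz^{q+1}+o(z^{q+1})$ into $f(w)=w+aw^{p+1}+o(w^{p+1})$ and expand $(z+bz^{q+1})^{p+1}$ by the binomial theorem to the required order. This yields
\begin{equation*}
f(g(z))=z+az^{p+1}+bz^{q+1}+a(p+1)bz^{p+q+1}+o(z^{p+q+1}).
\end{equation*}
Interchanging the roles of $f$ and $g$ gives
\begin{equation*}
g(f(z))=z+az^{p+1}+bz^{q+1}+b(q+1)az^{p+q+1}+o(z^{p+q+1}).
\end{equation*}
The key observation is that these two expansions agree through order $z^{p+q}$, while the coefficients of $z^{p+q+1}$ differ by exactly $ab(p+1)-ab(q+1)=ab(p-q)$.

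Setting $\phi=f\circ g$ and $\psi=g\circ f$, we thus have $\phi(z)=\psi(z)+ab(p-q)z^{p+q+1}+o(z^{p+q+1})$. Since $\psi$ is parabolic, so is $\psi^{-1}$, hence $(\psi^{-1}(z))^{p+q+1}=z^{p+q+1}+o(z^{p+q+1})$. Substituting then gives
\begin{equation*}
[f,g](z)=\phi(\psi^{-1}(z))=z+ab(p-q)z^{p+q+1}+o(z^{p+q+1}),
\end{equation*}
since $\psi(\psi^{-1}(z))=z$ cancels the common part, and the perturbation $ab(p-q)z^{p+q+1}$ survives precomposition with $\psi^{-1}$ up to the stated error.

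There is no substantive obstacle here beyond careful bookkeeping of the error terms; the degenerate case $p=q$ is automatically consistent with the statement, as the coefficient $ab(p-q)$ then vanishes, reflecting that the level of $[f,g]$ is strictly greater than $p+q+1$.
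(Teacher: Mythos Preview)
The paper does not supply its own proof of this lemma; it simply cites Proposition~6.11 of \cite{IY}. Your direct power-series computation via $[f,g]=(f\circ g)\circ(g\circ f)^{-1}$ is the standard argument and leads to the correct result.

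There is, however, a bookkeeping inaccuracy in your intermediate expansions that you should fix. The displayed formulas
\[
f(g(z))=z+az^{p+1}+bz^{q+1}+a(p+1)b\,z^{p+q+1}+o(z^{p+q+1})
\]
and its companion for $g(f(z))$ are not literally true: the higher-order coefficients of $f$ and $g$ (the terms hidden in ``$\cdots$'') contribute monomials $a_kz^k$ for $p+2\le k\le p+q+1$ and $b_kz^k$ for $q+2\le k\le p+q+1$, which are \emph{not} $o(z^{p+q+1})$. What is true---and what makes the argument go through---is that these extra contributions appear \emph{identically} in both $f\circ g$ and $g\circ f$ modulo $o(z^{p+q+1})$, so they cancel in the difference $\phi-\psi$. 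Once you state this explicitly (or simply write both expansions with a common symmetric tail, say $\sum_{k\ge p+2}a_kz^k+\sum_{k\ge q+2}b_kz^k$), the rest of your argument, including the final substitution into $\psi^{-1}$, is correct.
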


\begin{proposition} (Theorem. 6.10 \cite{IY})\label{prop:Gsolvable}
	Let $G$ be a finitely generated subgroup of parabolic germs $\Diff_1(\C,0)$. Then (i) or (ii) holds
	\begin{enumerate}
		\item[(i)]
		$G$ is abelian, i.e. of nilpotence class $n(G)\leq1$. 
		\item[(ii)]
		$G$ is not solvable (hence not nilpotent).
	\end{enumerate}
\end{proposition}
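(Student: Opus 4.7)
The plan is to prove the dichotomy by the contrapositive: assuming $G$ is not abelian, I show that the derived series $G^{(k)}$ never becomes trivial. The central tool is the \emph{level filtration} of $G$. For each $j\ge 1$, let $G[j]\subseteq G$ denote the identity together with the parabolic germs in $G$ of level $\ge j$. A direct calculation (using only that $G[j]$ is closed under composition and inverse) shows that each $G[j]$ is a normal subgroup of $G$, and the quotient $G[j]/G[j+1]$ embeds in $(\mathbb{C},+)$ via the leading coefficient map $f=z+az^{j+1}+\cdots\mapsto a$. Lemma \ref{lem:com} then reads as the key compatibility: $[G[j],G[k]]\subset G[j+k]$, and when $j\ne k$ the induced bracket on the associated graded is nonzero.

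Given that $G$ is nonabelian, pick $f,g\in G$ with $[f,g]\ne\mathrm{id}$; let $p=p(f)$ and $q=p(g)$. In the nondegenerate case $p\ne q$, Lemma \ref{lem:com} gives $p([f,g])=p+q$. Iterating the adjoint, $\mathrm{ad}_f^m(g)\in G^{(1)}$ has level $mp+q$, so $G^{(1)}$ contains elements of infinitely many distinct levels. Picking two such with distinct levels and bracketing them produces a nontrivial element of $G^{(2)}$ (again by Lemma \ref{lem:com}), and its level exceeds those of the two factors. Iterating this construction, one obtains, for every $k$, a nontrivial element of $G^{(k)}$, so $G$ is not solvable.

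The main obstacle is the degenerate case $p(f)=p(g)=p$, where Lemma \ref{lem:com} gives only $p([f,g])>2p$ (the leading coefficient $ab(p-q)$ vanishes) and does not by itself produce the level of $[f,g]$. The way I would handle this is to use finite generation: let $\{f_1,\dots,f_r\}$ generate $G$ and let $p_{\min}$ be the smallest of their levels. Among all pairs of generators and all commutators/conjugates formed so far, one produces, after finitely many bracketings, an element whose level is strictly greater than $p_{\min}$ (otherwise every commutator would have level $>2p_{\min}$ arbitrarily deep, forcing the group, modulo the level filtration, to be abelian at every grade in a way that would contradict non-abelianness of $G$). Once two elements of distinct levels are available in $G$ (or in $G^{(1)}$), the argument of the previous paragraph applies verbatim. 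The delicate point is to rigorously certify that finite generation together with non-abelianness forces the appearance of two non-commuting elements of different level; this is where the finite generation hypothesis is essential, since it allows one to pass to a finite-dimensional analysis of the leading Taylor coefficients of the generators at each level and conclude that otherwise $G$ would be contained in a one-parameter formal subgroup, hence abelian.
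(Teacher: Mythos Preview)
Your overall architecture is right and matches the paper's: use Lemma~\ref{lem:com} to show that once $G$ contains two elements of different levels, iterated commutators never die, so $G$ is non-solvable. The paper does exactly this (in the proof of the companion Lemma~\ref{lemma:GDiff_1}), taking $h=[f,g]$ and then bracketing repeatedly with $g$.

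The gap is in your ``degenerate case'' $p(f)=p(g)$. You flag it as the delicate point and invoke finite generation and a vague ``finite-dimensional analysis of leading Taylor coefficients'' to produce two elements of different levels. This is both incomplete as written and unnecessary. The observation you are missing is immediate from Lemma~\ref{lem:com} itself: if $[f,g]\ne\mathrm{id}$, then its level is at least $p(f)+p(g)>p(f)$, so $f$ and $[f,g]$ are already two elements of $G$ of \emph{different} levels, and your non-degenerate argument applies to this pair. No finite generation, no formal one-parameter subgroups are needed here.

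The paper organizes this more cleanly by taking the dichotomy at the outset to be ``all elements of $G$ have the same level'' versus ``two elements of different levels exist.'' In the first case, any commutator $[f,g]$ would have level strictly greater than the common level, which is impossible unless $[f,g]=\mathrm{id}$; hence $G$ is abelian. In the second case one iterates commutators as you do. This framing makes the equal-level case a one-line contradiction rather than an obstacle.
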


\begin{lemma}\label{lemma:GDiff_1}\cite{IY}
	Let $G$ be a finitely generated subgroup of parabolic germs $\Diff_1(\C,0)$. Then $G$ is solvable if and only if it is abelian. Moreover, one of the  following statements  holds:
	\begin{enumerate}	
		\item[(i)]
		$G$ is not abelian and there exist two diffeomorphisms $f$ and $g$ in $G$ of different levels.
		\item[(ii)]
		$G$ is abeliant  and all diffeomorphisms in $G$ are of the same level. 
	\end{enumerate}
\end{lemma}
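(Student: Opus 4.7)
The plan is to deduce both parts of the statement directly from Proposition \ref{prop:Gsolvable} and the commutator formula of Lemma \ref{lem:com}. The equivalence ``solvable if and only if abelian'' is essentially a restatement of the abelian-versus-non-solvable dichotomy of Proposition \ref{prop:Gsolvable}: solvability rules out the non-solvable alternative, forcing $G$ to be abelian, while the converse is automatic. So the real content lies in the second assertion about levels.

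For the ``moreover'' part, I plan two short contradiction arguments built on the commutator formula. First, if $G$ is abelian yet contains non-identity germs $f, g \in G$ of distinct levels $p \ne q$, then Lemma \ref{lem:com} produces the non-trivial commutator
\begin{equation*}
[f,g](z) = z + ab(p-q)\, z^{p+q+1} + o(z^{p+q+1}),
\end{equation*}
contradicting commutativity; this handles case (ii). Second, if $G$ is non-abelian but every non-identity element has a common level $p$, then for any $f, g \in G$ the factor $(p-q)=0$ kills the leading coefficient of $[f,g]$, so $[f,g]$ is either the identity or of level strictly greater than $p$. The latter option is excluded by hypothesis, forcing $[f,g] = \operatorname{id}$ for all $f, g \in G$ and making $G$ abelian---a contradiction that yields case (i).

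The only subtle point is checking that Lemma \ref{lem:com} genuinely controls \emph{all} intermediate coefficients of $[f,g]$ below degree $p+q+1$, not just the leading one; without this, the cancellation $(p-q)=0$ would not actually raise the level. This is already built into the statement, since a parabolic germ written as $z + c\, z^{p+q+1} + o(z^{p+q+1})$ has no non-trivial intermediate terms by convention. Hence no further computation is required, and the whole argument reduces to a direct algebraic consequence of the two cited results from \cite{IY}.
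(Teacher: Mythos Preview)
Your proof is correct and follows essentially the same approach as the paper: both arguments hinge on Lemma~\ref{lem:com} to show that equal levels force all commutators to be trivial (hence $G$ abelian), while distinct levels produce a non-trivial commutator. The only cosmetic difference is that the paper derives ``solvable $\Leftrightarrow$ abelian'' directly by iterating the commutator construction on $h=[f,g]$ and $g$ to exhibit non-solvability in case~(i), whereas you invoke Proposition~\ref{prop:Gsolvable} as a black box for that implication.
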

\begin{proof}
	Suppose  $G\neq \{id\}$.
	We now consider two cases: either there exist two germs $f=z+az^{p+1}+o(z^{p+1})$ and $g=z+bz^{q+1}+o(z^{q+1})$ in $G$ of different level $q\neq p$, or all germs are of the same level. 
	
	In the first case, by Lemma \ref{lemma:levels},  $h=[f,g](z)=z+ab(p-q)z^{p+q+1}+o(z^{p+q+1})\neq z$. Therefore, $G_1=[G,G]\neq \{id\}$. Applying inductively the same reasoning on $h$ and $g$, one can show that $G_{\ell}\neq \{id\}$ and $G^\ell\not=\{id\}$, for all $\ell$. Thus, $G$ is not nilpotent, nor solvable. 
	
	Now, suppose that all the elements of $G$ have the same level $p$. Then, by Lemma \ref{lemma:levels}, given $f=z+az^{p+1}+o(z^{p+1})$ and $g=z+bz^{p+1}+o(z^{p+1})$, the element $[f,g]$ is either of a level strictly greater than $p$ or the identity. The first option is impossible by the assumption, so 	 it follows that $G_2=G^1=\{id\}$. Thus $G$ is abelian. 
\end{proof}

\section{Proof of Theorem \ref{Thm:1}}
\begin{proof}
In the case (1), of product of lines in generic position, it is known that $\Ocom$ is abelian \cite{Uribe}. Hence, the orbit depth of the real cycle, as well as the length of the first non-zero Melnikov function for any deformation, is bounded by 2, by \cite{MNOP1}.\\

To prove that $\Ocom$ is non-solvable for cases (2) and (3) we follow the same strategy. In both cases, by an affine change of coordinates we can assume that the hamiltonian is given by
$F=(x-1)(x+1)f_3f_4$, where $f_3$ and $f_4$ are linear factors (non-parallel in case (2) and parallel in case (3)).

Now, consider the foliation $\mathcal{F}_\omega=\{dF+\epsilon \omega=0\}$, where $\omega=F^2(\frac{dx}{x-1}+F\frac{dx}{x+1})$, and the homomorphims defined by the holonomy with respect to $\mathcal{F}_\omega$:
$$P_{\omega}:\Ocom\rightarrow \C[\epsilon]\otimes Diff^1(\C,0).$$

We define the subgroup $G:=P_{\omega}(\Ocom)$ of the group of parabolic difeomorphims with coefficients depending on $\epsilon$. Note that this morphism $P_{\omega}$ is well defined, since $\mathcal{F}_\epsilon$ preserves the center at the origin, because it has a reflexion symmetry with respect to $y$-axis. And, since $P_{\omega}[\delta_1,\delta_2]=z+\epsilon^2W(z^2,z^3)\int_{[\delta_1,\delta_2]}\frac{dx}{x-1}\frac{dx}{x+1}+O(\epsilon^3)\neq z$, where $W$ is the Wronskian, then $G$ is not Abelian, and therefore non-solvable. 

In \cite{MNOP2} it is proved that the parallelogram has unbounded orbit depth. Now we show that for the trapezoid the orbit depth is $k\le 2$. By an affine change of coordinates let

	\begin{equation}\label{eq:FF}
	F=(x-1)(x+1)f_3f_4,
	\end{equation}
	with the configuration of Figure \ref{Fig:Trap}.
	The zero level $F^{-1}(0)$ consists of four lines, enclosing a quadrilateral and a triangle. Let $\gamma$ be the real cycle in $F^{-1}(z')$, $z'>0$, close to zero enclosed by the quadrilateral and let $\gamma_1$ be the real cycle in $F^{-1}(z'')$, $z''<0$, close to zero enclosed by the triangle, both with positive orientation.
	Let $p_i$ be the vertices of the quadrilateral oriented positively with $p_2$ and $p_3$ the commun vertices with the triangle and let $p_5$ be the last vertex of the triangle. 
	Let 
	$\delta_i$, be the vanishing cycles at the  vertices $p_i$, $i=1,\ldots,5$
	taken so  that the intersection numbers verify $(\gamma,\delta_i)=1$, $i=1,\ldots,4$ and $(\gamma_1,\delta_5)=1$.

\begin{figure}[h]
	\begin{center}
		\includegraphics[height=5cm]{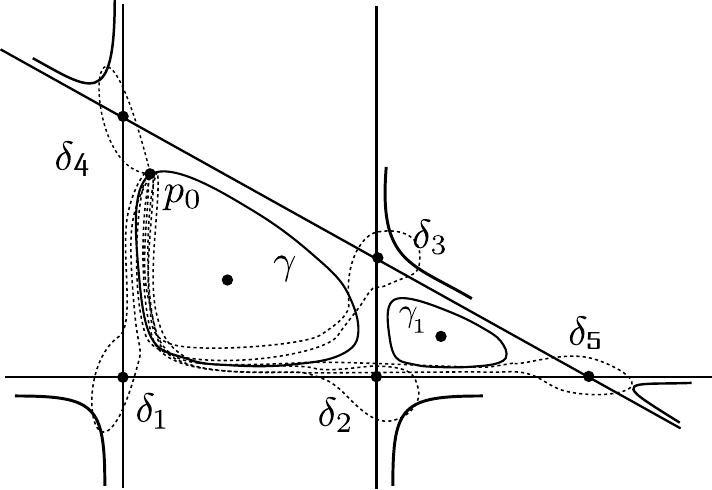}.
	\end{center}
	\caption{The real loops $\gamma(z)$ and $\gamma_1(z)$ and the complex vanishing loops $\delta_i(z)$ as elements of $\pi_1\left(F^{-1}(z),p_0\right)$.}
	\label{Fig:Trap}
\end{figure}


Using the Gauss-Manin connection, we continue analytically $\gamma, \gamma_1, \delta_i$ to a non-singular curve $F^{-1}(z)$, $0<iz\ll 1$, along segments $[z', z]$ and $[z'',z]$.
	Recall that, by \cite{AC}, the intersection numbers verify:
	\begin{equation}\label{eq:gammagamma1}(\gamma,\gamma_1)=1
	\end{equation}
	and
	\begin{equation}\label{eq:F}
	(\gamma_1,\delta_2)=(\gamma_1,\delta_3)=1.
	\end{equation}
\begin{remark}
 Theorem \ref{Thm:1} is also true in the complex case if conditions \eqref{eq:gammagamma1} and \eqref{eq:F} are fulfilled.

\end{remark}	
	In 
	\cite{MNOP1}, we calculated the orbit of the real cycle for a first integral of triangle type and in
	\cite{MNOP2}, for the quadrilateral type. The quadrilateral case in a neighborhood of the quadrilateral is the same as in the parallelogram case. Now, due to \eqref{eq:gammagamma1}, in our case \eqref{eq:FF}, the orbit of $\gamma$ is generated by the union of the orbits in these two cases. 
	This gives:
	$$
	\begin{aligned}
	&\frac{\mathcal{O}_\gamma}{[\OO_\gamma,\pi_1]}=\\
	&=<\{\gamma,\delta_1\delta_2\delta_3\delta_4,[\delta_1\delta_2,\delta_2\delta_3],[\delta_1\delta_2,[\delta_2,\delta_2\delta_3]],\ldots,
	[\delta_1\delta_2,[\delta_2,\ldots[\delta_2,\delta_2\delta_3]],\ldots\}\cup\mathcal{O}_{\gamma_1}>.
	\end{aligned}
	$$
	We know that $[\delta_2,\delta_3]$ belongs to the orbit of $\gamma_1$ and hence to $\mathcal{O}_\gamma$. Hence, $[\delta_2,\delta_2\delta_3]=\delta_2[\delta_2,\delta_3]\delta_2^{-1}$ belongs to the orbit as well. This gives that in the above expression for the orbit $\mathcal{O}_\gamma$ the commutator $[\delta_1\delta_2,[\delta_2,\delta_2\delta_3]]$ belongs to  $[\mathcal{O},\pi_1]$. The same is hence true for all terms which contain it (all the terms following it in the above braces). 
	We know \cite{MNOP1} that the depth of $\mathcal{O}_{\gamma_1}$ is two. 
	All the terms following the term $[\delta_1\delta_2,\delta_2\delta_3]$ are in $[\OO,\pi_1]$, showing that the depth $k$ verifies $k\leq2$ and hence by \cite{MNOP1} the length of the first non-zero function $M_\mu$ of any deformation $\omega$ is $\ell\leq2$.

\end{proof}

\section{Types of integrability of the deformations}
We will study the Godbillon-Vey sequence for the foliation \begin{equation}\label{eq:foliation}
dF+\epsilon\omega=0,
\end{equation} with $F=f_1f_2f_3f_4$ and $\omega=p_1(F)\frac{df_1}{f_1}+p_2(F)\frac{df_2}{f_2}$, where $f_1=x-1$, $f_2=x+1$, $p_1, p_2$ are polynomials in $F$, and  $f_3$, $f_4$ are linear factors different from $f_1$ and $f_2$. For the parallelogram, $f_3$ and $f_4$ define parallel lines, while for the trapezoid they do not.
By explicit computation of Godbillon-Vey sequences we will show that the foliation $dF+\epsilon(\frac{df_1}{f_1}+F\frac{df_2}{f_2})=0$ is Liouville integrable, while $dF+\epsilon(F\frac{df_1}{f_1}+F^2\frac{df_2}{f_2})=0$ is Riccati integrable. Moreover, for $\omega=p_1(F)\frac{df_1}{f_1}+p_2(F)\frac{df_2}{f_2}$, with $n=\deg\{\deg p_1,\deg p_2\}$, foliation \eqref{eq:foliation} admits a Godbillon-Vey sequence of length $\le n$, which increases complexity of the first integral for those cases. Therefore, for $n\ge 3$ the foliation \eqref{eq:foliation} could have a first integral which is not of Riccati type, with infinite dimensional pseudo-group of holonomy.\\

We recall that a Gobdillon-Vey sequence for a meromophic 1-form $\eta_0$ is a sequence $\{\eta_0,\eta_1,...\}$, such that \begin{equation}\label{eq:GV}
\begin{aligned}
d\eta_0&=\eta_0\wedge\eta_1\\
d\eta_1&=\eta_0\wedge\eta_2\\
d\eta_2&=\eta_0\wedge\eta_3+\eta_1\wedge\eta_2\\
d\eta_3&=\eta_0\wedge\eta_4+2\eta_1\wedge\eta_3\\
\vdots&\\
d\eta_n&=\eta_0\wedge\eta_{n+1}+\sum_{k=1}^{n}\binom{n}{k}\eta_k\wedge\eta_{n-k+1}\\
\end{aligned}
\end{equation}
It is of length $\ell$ if $\eta_j=0$ for all $j\ge \ell$.

It is known that $\eta_0$ has a first integral of Liouville type, resp. of Riccati type, if and only if, it admits a Godbillon-Vey sequence of length 2, resp. of length 3.

\subsection{Degree on $F$ equals 1}
Let us start analyzing the case $\omega=F\frac{df_1}{f_1}$. We denote $\eta_0:=dF+\epsilon F\frac{df_1}{f_1}$. Then $$d\eta_0=\epsilon dF\wedge \frac{df_1}{f_1}.$$
Thus, we can take $\eta_1:=\epsilon\frac{df_1}{f_1}=\frac{df_1^\epsilon}{f_1^\epsilon}$. Then, $d\eta_1=0$, which means $\eta_2=0$. Hence, $\eta_0$  admits a Godbillon-Vey sequence of length 2, and therefore \eqref{eq:foliation} is Liouville integrable. Moreover, Casale's article \cite{C} provides a way to compute from this sequence the first integral. This is given by $dH=G\eta_0$, where $\eta_1=\frac{dG}{G}$. In this case, we have $G=f_1^\epsilon$, then the first integral $H$ is given by $$f_1^\epsilon\eta_0=f_1^\epsilon(dF+\frac{df_1^\epsilon}{f_1^\epsilon})=f_1^\epsilon dF+Fdf_1^\epsilon=d(f_1^\epsilon F),$$
thus $H=f_1^\epsilon F$, which lies in a Liouvillian extension.
\\

Now we analyze the following case $\eta_0:=dF+\epsilon (\frac{df_1}{f_1}+F\frac{df_2}{f_2})$, which will give us a way to proceed for higher degree. Substituting $f_1=x-1$, $f_2=x+1$, we obtain $\eta_0=dF+\epsilon(\frac{1}{x-1}+\frac{F}{x+1})dx$. Denote $\varphi(x,F)=\frac{1}{x-1}+\frac{F}{x+1}$. Then $\eta_0=dF+\epsilon\varphi dx$, and \begin{equation}\label{eq:deta_0}
d\eta_0=\epsilon d\varphi\wedge dx.
\end{equation}
Note that $d\varphi=\varphi_FdF+\varphi_xdx$, where $\varphi_F=\frac{\partial\varphi}{\partial F}(x,F)=\frac{1}{x+1}$, and $\varphi_x=\frac{\partial \varphi}{\partial x}(x,F)=\frac{-1}{(x-1)^2}-\frac{F}{(x+1)^2}$.
Then, \eqref{eq:deta_0} is equal to \begin{equation}
d\eta_0=\epsilon \varphi_FdF\wedge dx.
\end{equation}

Define $\eta_1:=\epsilon\varphi_Fdx$. It satisfies the equation $d\eta_0=\eta_0\wedge \eta_1=(dF+\epsilon\varphi dx)\wedge \epsilon\varphi_Fdx$. On the other hand, $d\eta_1=\epsilon d\varphi_F\wedge dx$, where $d\varphi_F=\varphi_{FF}dF+\varphi_{Fx}dx$. That is, $d\eta_1=\epsilon\varphi_{FF}dF\wedge dx$, which is zero because $\varphi_{FF}=0$. Then, we can take $\eta_2=0$. Thus, $\eta_0$ admits a Godbillon-Vey sequence of length $2$, and therefore is Liouville integrable.
To compute the first integral $H$, we have to solve the equations $dH=G\eta_0$, and $\eta_1=\frac{dG}{G}$. For this, we note that $\eta_1=\epsilon\frac{dx}{x+1}=\frac{df_2^\epsilon}{f^\epsilon_2}$. Then, $dH=f_2^\epsilon\eta_0=f_2^\epsilon(dF+\epsilon (\frac{df_1}{f_1}+F\frac{df_2}{f_2}))$, which corresponds to $$f_2^\epsilon (dF+\frac{df_1^\epsilon}{f_1^\epsilon}+F\frac{df_2^\epsilon}{f_2^\epsilon})=f_2^\epsilon dF+f_2^\epsilon\frac{df_1^\epsilon}{f_1^\epsilon}+Fdf_2^\epsilon=d(f_2^\epsilon F+\int f_2^\epsilon\frac{df_1^\epsilon}{f_1^\epsilon} ).$$ Therefore $H=f_2^\epsilon F+\int f_2^\epsilon\frac{df_1^\epsilon}{f_1^\epsilon}$. Substituting $f_1$, $f_2$ we get $\int f_2^\epsilon\frac{df_1^\epsilon}{f_1^\epsilon}=\int \epsilon(x+1)^\epsilon\frac{dx}{x-1}$.

Note that the above construction works more generally  if $f_1$ and $f_2$ 
correspond to two parallel curves.

\section{Proof of Theorem \ref{Thm:2}} \begin{proof}

We consider the deformation $dF+\epsilon (F\frac{df_1}{f_1}+F^2\frac{df_2}{f_2})$, where $f_1=x-1$, $f_2=x+1$, and define \begin{equation}
\eta_0:=dF+\epsilon\left(\frac{F}{x-1}+\frac{F^2}{x+1}\right)dx.
\end{equation} 

Denote $\varphi(x,F)=\frac{F}{x-1}+\frac{F^2}{x+1}$. Then $\eta_0=dF+\epsilon\varphi dx$, and since $d\varphi=\varphi_FdF+\varphi_xdx$, where 
$\varphi_F$ is the partial derivative of $\varphi$ with respect to $F$:
$\varphi_F=\frac{1}{x-1}+\frac{2F}{x+1}$, we have \begin{equation}
d\eta_0=\epsilon \varphi_FdF\wedge dx.
\end{equation}
Thus, we define $\eta_1:=\epsilon\varphi_Fdx$. It satisfies the equation $d\eta_0=\eta_0\wedge\eta_1$.
Now we consider $d\eta_1=\epsilon d\varphi_F\wedge dx$. Then, again, writing $d\varphi_F=\varphi_{FF}dF+\varphi_{Fx}dx$, we have $d\eta_1=\epsilon\varphi_{FF}dF\wedge dx$.
So, we can take $\eta_2:=\epsilon\varphi_{FF}dx$. It satisfies the equation $d\eta_1=\eta_0\wedge\eta_2$. Continuing with the Godbillon-Vey sequence we consider $d\eta_2=\epsilon d\varphi_{FF}\wedge dx$. But, since the degree of $\varphi$ in $F$ is 2, $\varphi_{FF}$ depends only on $x$, therefore $d\varphi_{FF}\wedge dx=0$. Now, for the equation $d\eta_2=\eta_0\wedge\eta_3+\eta_1\wedge\eta_2$, since $\eta_1\wedge\eta_2=0$, we can take $\eta_3=0$. Hence, $\eta_0$ admits a Godbillon-Vey sequence of length 3, and therefore it has a first integral of Riccati type.

Using Casale's method \cite{C} we can compute the first integral by solving the following equations:

\begin{equation}
\begin{aligned}
dH&=G_1\eta_0\\
dG_1&=G_1(\eta_1+\frac{2}{G_2}\eta_0)\\
dG_2&=\frac{G_2^2}{2}\eta_2+G_1\eta_1+\eta_0,
\end{aligned}
\end{equation}
with $\eta_0=dF+\epsilon(\frac{F}{x-1}+\frac{F^2}{x+1})dx$, $\eta_1=\epsilon(\frac{1}{x-1}+\frac{2F}{x+1})dx$, and $\eta_2=\epsilon(\frac{2}{x+1})dx$.\\

 Now, we analyze what happens when we increase the degree in $F$ for the foliation \eqref{eq:foliation}.
Let \begin{equation}
\eta_0:=dF+\epsilon \left(F^2\frac{df_1}{f_1}+F^3\frac{df_2}{f_2}\right).
\end{equation} Substituting $f_1=x-1$, $f_2=x+1$, we obtain $\eta_0=dF+\epsilon(\frac{F^2}{x-1}+\frac{F^3}{x+1})dx$. Following the same process as before, we write $\eta_0=dF+\epsilon\varphi dx$, where $\varphi(x,F)=\frac{F^2}{x-1}+\frac{F^3}{x+1}$. Since $d\varphi=\varphi_FdF+\varphi_xdx$, we have \begin{equation}
d\eta_0=\epsilon \varphi_FdF\wedge dx,
\end{equation}
where $\varphi_F=\frac{2F}{x-1}+\frac{3F^2}{x+1}$.

Thus, we define $\eta_1:=\epsilon\varphi_Fdx$, note that $\deg_F\varphi_F=2$. It satisfies the equation $d\eta_0=\eta_0\wedge\eta_1$.
Now we consider $d\eta_1=\epsilon d\varphi_F\wedge dx$. Writing $d\eta_F=\varphi_{FF}dF+\varphi_{Fx}dx$, we have $d\eta_1=\epsilon\varphi_{FF}dF\wedge dx$.
So, we can take again $\eta_2:=\epsilon\varphi_{FF}dx$, where now $\deg_F\varphi_{FF}=1$. It satisfies the equation $d\eta_1=\eta_0\wedge\eta_2$. Then $d\eta_2=\epsilon d\varphi_{FF}\wedge dx$. We want $\eta_3$ such that $d\eta_2=\eta_0\wedge\eta_3+\eta_1\wedge\eta_2$. Since $\eta_1\wedge\eta_2=0$, we can take $\eta_3=\epsilon\varphi_{FFF}dx$. Consider $d\eta_3=\epsilon d\varphi_{FFF}\wedge dx$. This is zero, since $\varphi_{FFF}$ depends only on $x$. We want now $\eta_4$ verifying the equation $d\eta_3=\eta_0\wedge\eta_4+2\eta_2\wedge\eta_3$. But, since $\eta_2\wedge\eta_3=0$, we can take $\eta_4=0$. Moreover, from this construction $\eta_k\wedge\eta_\ell=0$ for all $k,\ell\ge 1$, hence this Godbillon-Vey sequence has length 4. Therefore, $\eta_0$ could have a first integral which is not of Riccati type. With the same process we can obtain a deformation with a Godbillon-Vey sequence of any finite length $n$, by taking the degree in $F$ of the deformation equal to $n-1$.

Note that the above computations do not depend on the factors $f_3$ and $f_4$, therefore they work for the parallelogram, as well as for the trapezoid.
\end{proof}
\begin{remark}
	From these cases one can observe that the length of the Godbillon-Vey sequence for the foliation \eqref{eq:foliation} increases with the degree in $F$ of the function $\varphi(x,F)$. So, increasing the degree in $F$ for the deformation, we can have a first integral of more complicated type.
	
	In particular, for $n=\deg_F\varphi>4$ one should obtain a Godbillon-Vey sequence of finite length $n$ higher than 4. In the paper \cite{CLNLP} (p. 25) the authors say they do not know any example of finite length greater than $4$. However, the length is defined as the minimal length among all the Godbillon-Vey sequences that the foliation can admit. So, in order to prove that these examples for $n>4$ have Godbillon-Vey length higher than $4$, one should prove that the constructed Godbillon-Vey is optimal. That is that there does not exist some other Godbillon-Vey sequence of smaller length.
\end{remark}

\end{document}